\documentclass[amsthm]{elsart}

\makeatletter
\def\@journal{}
\let\@date\@empty

\makeatother

\usepackage{yjsco}
\usepackage{natbib}
\usepackage{fleqn}
\usepackage{amssymb,latexsym}
\usepackage{footnote}

\usepackage{ulem}
\usepackage{xcolor}

\newtheorem{rmk}{Remark}
\newtheorem{exa}{Example}
\newtheorem{question}{Question}
\newtheorem{conjecture}{Conjecture}

\begin{document}
\begin{frontmatter}

\title{On the cactus rank of cubics forms}

\thanks{The first author was partially supported by  Project Galaad of INRIA So\-phia Antipolis M\'editerran\'ee  (France)  and  by Marie Curie Intra-European Fellowships for Career Development (FP7-PEOPLE-2009-IEF): ``DECONSTRUCT".}

\author{Alessandra Bernardi}
\address{
Dipartimento di Matematica, Universit\`a di Trento, via Sommarive 14, 38123, Trento, Italy.}
\ead{alessandra.bernardi@unitn.it}
\ead[url]{https://sites.google.com/unitn.it/alessandra-bernardi/home}

\author{Kristian Ranestad}
\address{Matematisk institutt, Universitetet i Oslo, PO Box 1053, Blindern, NO-0316 Oslo, Norway.}
\ead{ranestad@math.uio.no}
\ead[url]{http://folk.uio.no/ranestad/}

\begin{abstract}
We prove that the smallest degree of an apolar $0$-dimensional scheme of a general cubic form in $n+1$ variables is at most $2n+2$, when $n\geq 8$,  and therefore smaller than the rank of the form. For the general reducible cubic form  the smallest degree of an apolar subscheme is $n+2$, while the rank is at least $2n$.
\end{abstract}

\begin{keyword}
Cactus rank, 
Cubic forms, 
Apolarity.
\end{keyword}

\end{frontmatter}

\section*{Introduction}
The \emph{rank} of a homogeneous form $F\in S:= {\mathbb C}[x_0,\ldots ,x_n]$ of degree $d$ is the minimal number of linear forms $L_1,\ldots,L_r$ needed to write $F$ as a sum of pure $d$-powers:
\[
F=L_1^d+\cdots+L_r^d.
\]
Various other notions of rank, such as \emph{border rank} and \emph{cactus rank}, appear in the study of higher secant varieties and are closely related to the rank.
 The cactus rank is the minimal length of an apolar subscheme to $F$, while the border rank is the minimal $r$ such that $F$ is a limit of forms of rank $r$. For an extensive description and usage of the classical concept of \emph{apolarity}, we refer to \citep{IK} and \citep{RS} and the references therein, which go back to the late XIX century with A. Clebsch, J. L\"uroth, T. Reye, G. Scorza and to the beginning of the XX century with E. Lasker, F. H. S. Macaulay, J. J. Sylvester, A. Terracini and E. K. Wakeford. 

The notion of cactus rank is recent and  coincides with scheme length introduced by Iarrobino and Kanev in \citep{IK}.  We use the name cactus rank to make the association to \emph{cactus varieties} introduced in \citep{BB} in a study of higher secant varieties.  

The cactus rank and the border rank are both less than or equal to the rank as is explained in Section \ref{AGs}, while a natural lower bound for both of them is the differential length (also called the Hankel rank),  the maximum of the dimensions of the 
space of $k$-th order partials of $F$ as $k$ varies between $0$ and 
$d$.   For a general form the rank and the border rank coincide, but little is known about the cactus rank beyond these bounds, cf. \citep{IK}.

 For specific forms, more is known: For irreducible forms that do not define a cone, the cactus 
 rank is minimal for forms of Fermat type, e.g. $F=x_0^d+\cdots+x_n^d$. 
 In this case the rank coincides with the Hankel rank and hence also with the cactus rank and the border rank.

The first main result of this paper, Theorem \ref{ND}, is that for large $n$ and $d$, the cactus rank of a general form is strictly less than the rank. 

For cubic forms we give more specific results: In Section \ref{cubics} we show that there are cubic forms with 
 minimal cactus rank (equal to the Hankel rank) whose border rank is strictly higher and compute the cactus rank of a general reducible cubic form. 
 
 The rank of forms has seen growing interest in recent years.  Any apolar subscheme to $F$ of minimal length is locally Gorenstein (\citep[proof of Lemma 2.4]{BB}), therefore this work is close in line to \citep{I}, \citep{IK} and \citep {ER}, in their study of apolarity and the  local Gorenstein algebra associated to a polynomial.   Applications to higher secant varieties can be found in \citep{CC02},  \citep{BB} and \citep{LO},  while the papers \citep{LT}, \citep{BCMT}, \citep{BGI}, \citep{CCG}  and \citep{OO} concentrate on effective methods to compute the rank and to compute an explicit decomposition of a form.   In a different direction, the rank of cubic forms associated to canonical curves has been computed in \citep{DZ1}, \citep{DZ2} and \citep{BCN}. 
 
 \section{Apolar Gorenstein subschemes}\label{AGs}

We consider homogeneous polynomials $F\in S:= {\mathbb C}[x_0,\ldots ,x_n]$, and 
consider the dual ring $T:= {\mathbb C}[y_0,\ldots ,y_n]$ acting on $S$ by 
{contraction}:  
\[
y_j(x_i)=\frac{d}{dx_j}(x_i)=\delta_{ij}.
\]
{Differentiation may be used instead of contraction, if care is made with coefficients.}
Let $S_1$ and $T_1$ be the degree 1 parts of $S$ and $T$ respectively. With respect to the action above, $S_{1}$ and $T_{1}$ are 
natural dual spaces and $\langle x_{0},\ldots,x_{n}\rangle$ and 
$\langle y_{0},\ldots,y_{n}\rangle$ are dual bases.   In particular $T$ is 
naturally the coordinate ring of ${\mathbb P}(S_{1})$, the projective space of 
$1$-dimensional subspaces of $S_{1}$, and vice versa.
The annihilator of $F$ is an ideal in  $T$ which we denote by $F^{\bot}\subset 
T$. The quotient $T_{F}:=T/F^{\bot}$ is graded Artinian and Gorenstein since $F^{\bot}$ is homogeneous and $T_F$  is finitely generated as a $\mathbb{C}$-module (\emph{Artinian}) and has a $1$-dimensional socle, the annihilator of the unique maximal ideal (\emph{Gorenstein}).   The socle in $T_F$ is the degree $d$ part of the ring (see e.g. \citep[Lemma 2.14]{IK}).

\begin{defn}
A subscheme $X\subset {\mathbb P}(S_{1})$ is apolar to $F\in S$ if its homogeneous
ideal $I_{X}\subset T$ is contained in $F^{\bot}$. 
\end{defn}

Any apolar subscheme to $F$ of minimal length is locally Gorenstein (\citep[proof of Lemma 2.4]{BB}), therefore we concentrate on finite local Gorenstein schemes.  More precisely, we consider finite subschemes $\Gamma\subset  {\mathbb P}(S_{1})$ isomorphic to ${\rm Spec}R$, where $R$ is a local Artinian Gorenstein ${\mathbb C}$-algebra.  The ring $R$ does not have to be graded.  On the other hand, if $R={\mathbb C}[y_1,\ldots ,y_n]/I$ is a local Artinian Gorenstein algebra, then $I$ is the annihilator $ f^{\bot}$, via {contraction}, of some polynomial $f\in {\mathbb C}[x_1,\ldots ,x_n]$ (cf. \citep[Lemma 1.2]{I}).
If the polynomial $f$ is homogeneous, then $R$ is graded.  This is the case of the form $F$ above.  Now, we consider the affine scheme ${\rm Spec}R$ for possibly inhomogeneous polynomials.
  
In fact, the homogeneous polynomial $F\in S$ admits some natural finite local apolar Gorenstein subschemes.
Let $S_{x_0}:={\mathbb C}[x_1,\ldots ,x_n]$ and  $T_{y_{0}}:={\mathbb C}[y_1,\ldots ,y_n]$.  Consider
the polynomial
$f=F(1,x_{1},\ldots,x_{n})\in S_{x_0}$ and the Artinian 
Gorenstein quotient $T_{f}:=T_{y_{0}}/f^{\bot}$.  We show that the image of the natural embedding ${\rm Spec}(T_{f})\subset \mathbb{P}(S_1)$ is apolar 
$F\in S$. \\
What we have just described for the special case of $f\in S_{x_0}$, the dehomogenisation of $F\in S$ by $x_0$, can be repeated with any other linear form $l\in S_1$.  In fact, $F$ admits a natural apolar Gorenstein subscheme for any linear form in $S$.

Any nonzero linear form $l\in S$ belongs to a basis 
$(l,l_{1},\ldots,l_{n})$ of $S_{1}$, with dual basis 
$(l',l_{1}',\ldots,l_{n}')$ of $T_{1}$.  In particular the 
homogeneous ideal in $T$ of 
the point $[l]\in {\mathbb P}(S_{1})$ is generated by
$\{l_{1}',\ldots,l_{n}'\}$, while $\{l_{1},\ldots,l_{n}\}$ generates 
the ideal of the point $\phi([l])\in{\mathbb P}(T_{1})$, where 
$\phi:{\mathbb P}(T_{1})\to {\mathbb P}(S_{1}),\; y_{i}\mapsto x_{i}, i=0,\ldots,n$. 

The form $F\in S$ defines a hypersurface $\{F=0\}\subset {\mathbb P}(T_{1})$.  
The Taylor expansion of $F$ with respect to the point 
$\phi([l])$ may naturally be expressed in the coordinates functions  $(l,l_{1},\ldots,l_{n})$. 
 Thus there exist $a_0, \ldots , a_d \in \mathbb{C}$ such that
\[
F=a_{0}l^{d}+a_{1}l^{d-1}f_{1}( 
l_{1},\ldots,l_{n})+\cdots+a_{d}f_{d}( 
l_{1},\ldots,l_{n}).
\]
 We denote the corresponding dehomogenisation 
of $F\in S$ with respect to $l\in S_1$ by $F_{l}\in S_{l}$, i.e.   
\[
F_{l}=a_{0}+a_{1}f_{1}( 
l_{1},\ldots,l_{n})+\cdots+a_{d}f_{d}( 
l_{1},\ldots,l_{n}).
\]
Notice that the subscript number of $f_i$ refers to the degree $i$ of the form, distinct from the subscript form of $F_l$ that indicate dehomogenisation with respect to $l$. \\

Also, we denote the subring of $T$ generated by 
$\{l_{1}',\ldots,l_{n}'\}$ by $T_{l'}$.  It is the natural 
coordinate ring of the affine subspace $\{l'\not=0\}\subset {\mathbb P}(S_{1})$.

\begin{lem}\label{dehom} The Artinian Gorenstein scheme $\Gamma(F_{l})$ defined 
    by $F_{l}^{\bot}\subset T_{l'}$ is apolar to $F$, i.e. the 
    homogenisation $(F_{l}^{\bot})^{h}\subset F^{\bot}\subset T$.
    \end{lem}
    \begin{pf} If $g\in F_{l}^{\bot}\subset {\mathbb C}[l_{1}',\ldots,l_{n}']$, 
	then $g=g_{1}+\cdots+g_{r}$ 
	where $g_{i}$ is homogeneous in degree $i$.  Similarly 
	$F_{l}=f=f_{0}+\cdots+f_{d}$.  The annihilation $g(f)=0$ means that for each 
	$e\geq 0$, $\sum_{j}g_{j}f_{e+j}=0$.
	Homogenizing we get
	\[
	g^{h}=G=(l')^{r-1}g_{1}+\cdots+g_{r}, \quad 
	f^{h}=F=l^{d}f_{0}+\cdots+f_{d}
	\]
	and 
	\[
	G(F)=\sum_{e}\sum_{j}l^{d-r-e}g_{j}f_{e+j}=\sum_{e}l^{d-r-e}\sum_{j}g_{j}f_{e+j}=0.
	\]
	\end{pf}

	\begin{rmk}\label{lhom} (Suggested by Mats Boij) The ideal $(F_{l}^{\bot})^{h}$ may 
	     be obtained without dehomogenising $F$.  Write 
	     $F=l^{e}F_{d-e}$, such that $l$ does not divide $F_{d-e}$.
	     Consider the form $
	     l^{d-e}F_{d-e}$ of degree $2(d-e)$.
	    Unless $d-e=0$, i.e. $F=l^{d}$, the degree $d-e$ part of the annihilator 
	    $(l^{d-e}F_{d-e})^{\bot}_{d-e}$ generates an ideal in 
	    $(l)^{\bot}$ and the saturation of $(l^{d-e}F_{d-e})^{\bot}_{d-e}$ coincides with $(F_{l}^{\bot})^{h}$.
	    In fact if $G\in T_{d-e}$ then 
	    \[
	    G(l^{d-e}F_{d-e})=G(l^{d-e})F_{d-e}+lG(l^{d-e-1})F_{d-e}
	    \]
	    so $G(l^{d-e}F_{d-e})=0$ only if $G(l^{d-e})=0$.
	    \end{rmk}

Apolarity was used classically to characterizes powersum 
decompositions of $F$, cf. \citep{IK}, \citep{RS} and the references therein.
In fact, the annihilator of a power of a linear form $l^{d}\in S$ is the ideal of the 
corresponding point $p_{l}\in \mathbb{P}_{T}$ in degrees at most $d$. 
Therefore $F=\sum_{i=1}^{r}l_{i}^{d}$  only if $I_{\Gamma}\subset 
F^{\bot}$ where $\Gamma=\{p_{l_{1}},\ldots,p_{l_{r}}\}\subset 
\mathbb{P}_{T}$.  On the other hand, if $I_{\Gamma,d}\subset 
F^{\bot}_{d}\subset T_{d}$, then any differential form that annihilates each 
$l_{i}^{d}$ also annihilates $F$, so, by duality, $[F]$ must lie in the linear 
span of the $[l_{i}^{d}]$ in ${\mathbb P}(S_{d})$.  Thus $F=\sum_{i=1}^{r}l_{i}^{d}$ if and  only if $I_{\Gamma}\subset 
F^{\bot}$.

The various notions of rank for $F$ listed in the introduction are therefore naturally defined by 
apolarity :
The \emph{cactus rank} $cr(F)$ is defined as
\[
cr(F):={\rm min}\{{\rm length\; of \;a \;scheme \; }\Gamma \;|\; \Gamma\subset {\mathbb P}(T_{1}), 
{\rm dim}\Gamma=0, I_{\Gamma}\subset F^{\bot}\},
\] 
the \emph{smoothable rank} $sr(F)$ is defined as
\[
sr(F):={\rm min}\{{\rm length\; of \;a \;scheme \; }\Gamma \;|\; \Gamma\subset {\mathbb P}(T_{1}) 
\;\;{\rm smoothable}, 
{\rm dim}\Gamma=0, I_{\Gamma}\subset F^{\bot}\}
\]
and the \emph{rank} $r(F)$ is defined as
\[
r(F):={\rm min}\{{\rm length\; of \;a \;scheme \; }\Gamma \;|\; \Gamma\subset {\mathbb P}(T_{1}) \;\;{\rm smooth}, 
{\rm dim}\Gamma=0, I_{\Gamma}\subset F^{\bot}\}.
\]

A smoothable scheme of length $r$ in ${\mathbb P}(T_{1})$  is an element in the irreducible component of the Hilbert scheme containing the smooth schemes of ${\mathbb P}(T_{1})$ of length $r$.

The separate notion of \emph{border rank}, $br(F)$, often considered, is not 
defined by apolarity. 
It is the minimal $r$, such that $F$ is the limit of polynomials of 
rank $r$. These notions of rank coincide 
 with the notions of length of annihilating schemes in 
Iarrobino and Kanev's book \citep[Definition 5.66]{IK}.  Thus cactus 
rank coincides with the scheme length, 
$cr(F)=l{\rm sch}(F)$, and smoothable rank coincides with the smoothable scheme length, $sr(F)=l{\rm schsm}(F)$, while border rank 
coincides with length $br(F)=l(F)$.  In addition they consider the
differential length $l{\rm diff}(F)$, the maximum of the dimensions of the 
space of $k$-th order partials of $F$ as $k$ varies between $0$ and 
deg$F$.  This length is the maximal rank of a catalecticant or Hankel 
matrix at $F$.

Inequalities between these ranks valid for any form $F$ are summarized in  \citep[Lemma 5.17]{IK}.
Clearly, by the definitions above, 
\[
cr(F)\leq sr(F)\leq r(F).
\]
  Furthermore,  
  \[
  br(F)\leq sr(F), \quad {\rm  while}\quad l{\rm diff}(F)\leq br(F)\quad {\rm and}\quad  l{\rm diff}(F)\leq cr(F).
  \]   

For a general form $F$ in $S$ of degree 
$d$ the rank, the smoothable 
rank and the border rank coincide and equals, 
by the Alexander-Hischowitz theorem (see \citep{AH}), 
\[
br(F)=sr(F)=r(F)=\left\lceil\frac{1}{n+1}{n+d \choose d}\right\rceil, 
\]
when $d>2, \;(n,d)\not=(2,4),(3,4),(4,3),(4,4)$.
The local Gorenstein subschemes considered above show that the cactus rank for a general polynomial may be smaller.
Let 

\begin{equation} \label{Nd}
N_{d}=\left\{ \begin{array}{cr}
 2{n+k\choose k}&{\rm when} \; d=2k+1 \\
{n+k\choose k}+{n+k+1 \choose k+1}& {\rm when} \; d=2k+2 \\
\end{array} \right.
\end{equation}
and denote by ${\rm Diff}(F)$ the subspace of $S$ generated by the partials 
of $F$ of all orders, i.e. of order $0,\ldots,d={\rm deg}F$.

\begin{thm}\label{ND} Let $F\in S= {\mathbb C}[x_0,\ldots ,x_n]$ be a homogeneous 
    form of degree $d$, and let $l\in S_{1}= \langle x_{0},\ldots,x_{n}\rangle$ be any 
    linear form. Let $F_{l}$ be a dehomogenisation of $F$ with 
    respect to $l$. Then 
    \[
    cr(F)\leq {\rm dim}_{K}{\rm Diff}(F_{l}).
    \]
    In particular,
    \[
    cr(F)\leq N_{d}.
    \]
    \end{thm}
\begin{pf} According to Lemma \ref{dehom} the subscheme 
    $\Gamma({F_{l}})\subset {\mathbb P}(T_{1})$ is apolar to  $F$.
    The subscheme $\Gamma({F_{l}})$ is affine and has length equal to 
    \[
    {\rm 
    dim}_{k} T_{l'}/F_{l}^{\bot}={\rm dim}_{K}{\rm Diff}(F_{l}).
    \]
    If all the partial derivatives of $F_{l}$ of order at most 
    $\lfloor\frac{d}{2}\rfloor$ are 
    linearly independent, and the partial derivatives of higher order 
    span the space of polynomials of degree at most 
    $\lfloor\frac{d}{2}\rfloor$, 
    then 
    \[
    {\rm dim}_{K}{\rm Diff(F_{l})}= 
    1+n+{n+1 \choose n-1}+\cdots+{n+\lfloor\frac{d}{2}\rfloor \choose n-1}+\cdots+n+1=N_{d}.
    \]
      Clearly this is an 
    upper bound so the theorem follows.
     \end{pf}
   
Local 
apolar subschemes of minimal lengthto some $F$ may not be of the kind $\Gamma 
(F_{l})$, described above.  In fact, even quadratic forms have local apolar 
of length equal to its rank that are not of the kind $\Gamma 
(F_{l})$ (cf. \citep[Corollary 2.7]{RS2}).
   
\begin{question}  What is the cactus rank $cr(n,d)$ for a general 
    form $F\in {\mathbb C}[x_0,\ldots ,x_n]_{d}$?
\end{question}

\section{Cubic forms}\label{cubics}

If $F\in S$ is a general cubic form, then the cactus rank according to Theorem \ref{ND} is at 
most $2n+2$. 

If $F$ is a general reducible cubic form in $S$ and $l$ is a linear factor, then 
$f=F_{l}$ is a quadratic polynomial and  $\Gamma(f)$ is smoothable of length at 
most $n+2$: The partials of a nonsingular quadratic polynomial in $n$ variables 
 form a vector space of dimension $n+2$, so this is the length of  $\Gamma(f)$.
 On the other hand let $E$ be an elliptic normal curve of degree $n+2$ in 
$\mathbb{P}^{n+1}$.  Let $T(E)$ be the homogeneous coordinate ring of 
$E$.  A quotient of $T(E)$ by two general linear forms is Artinian 
Gorenstein with Hilbert function $(1,n,1)$ isomorphic to $T_{q}$ 
for a quadric $q$ of rank $n$.  A quotient of $T(E)$ by two general inhomogeneous 
linear polynomials is the coordinate ring of $n+2$ distinct points. Thus 
$T_{f}$ is isomorphic to $T_{q}$ and $\Gamma(f)$ is smoothable.

\begin{thm}\label{CactusCubics}  For a general cubic form $F\in 
    {\mathbb C}[x_{0},\ldots,x_{n}]$, the cactus rank is
    \[
    cr(F)\leq 2n+2.
    \]
  For a general reducible cubic form $F\in {\mathbb C}[x_{0},\ldots,x_{n}]$ 
    with $n>1$, 
    the cactus rank and the smoothable rank are
    \[
    cr(F)=sr(F)=n+2.
    \]
    \end{thm}
    \begin{pf} It remains to show that for a general reducible 
	cubic form $cr(F)\geq n+2$.  
	On the one hand, if $\Gamma\subset 
	\mathbb{P}_{T}$ has length less than $n+1$ it is contained in 
	a hyperplane, so $I_{\Gamma}\subset F^{\bot}$ only if the 
	latter contains a linear form.  If $\{F=0\}$ is not a cone, 
	this is not the case.   On the other hand, if $\Gamma\subset 
	\mathbb{P}_{T}$ has length $n+1$, then, for the same reason, 
	this subscheme must span $\mathbb{P}_{T}$.  Its ideal in that 
	case is generated by ${n+1 \choose 2}$ quadratic forms.  If 
	$F$ is general, $F^{\bot}_{2}$ is also generated by 
	${n+1 \choose 2}$ quadrics, so they would have to coincide.  
For $F^{\bot}_{2}$ to generate the ideal of a scheme of length $n+1$ is a closed condition on cubic 
	forms $F$.  If $F=x_{0}(x_{0}^{2}+\cdots+x_{n}^{2})$, then 
	\[
	F^{\bot}_{2}=\langle y_{1}y_{2},\ldots,y_{n-1}y_{n}, 
	y_{0}^{2}-y_{1}^{2},\ldots,y_{0}^{2}-y_{n}^{2}\rangle.
	\]
	In particular ${\rm dim}F^{\bot}_{2}={n+1 \choose 2}$,
	but the quadrics $F^{\bot}_{2}$ do not have any common zeros, so $cr(F)\geq 
	n+2$.  The  general reducible cubic must therefore also have 
	cactus rank at least $n+2$ and the theorem follows.
	\end{pf}
	\begin{rmk} By \citep[Theorem 1.3]{LT} the lower bound for 
	    the rank of a reducible cubic form that depends on $n+1$ 
	    variables and not less, is $2n$.  
	    \end{rmk}
	   If $F=x_{0}F_{1}(x_{1},\ldots,x_{n})$ where $F_{1}$ is a 
	   quadratic form of rank $n$, then 
	   \[
	   cr(F)=sr(F)=n+1,
	   \]
	    the same as for a Fermat cubic, while the rank is at least $2n$.
	 
	 We give an example with $cr(F)= l{\rm diff}(F)=n+1<sr(F)$.

	 \begin{exa}   
	   	 Let $G\in {\mathbb C}[x_{1},\ldots,x_{m}]$ be a cubic form such that the 
	 scheme $\Gamma({G})={\rm 
	 Spec}({\mathbb C}[y_{1},\ldots,y_{m}]/G^{\bot})$ has length $2m+2$ and is not 
	 smoothable.  By \citep[Section 4A]{I84} examples occur for $m\geq 6$.  Denote by $G_{1}=y_{1}(G),\ldots,G_{m}=y_{m}(G)$ the first order 
	 partials of $G$.  Let 
	 \[
	 F=G+x_{0}x_{1}x_{m+1}+x_0x_2x_{m+2}+\cdots 
	 +x_{0}x_{m}x_{2m}+x_{0}^{2}x_{2m+1}\in 
	 {\mathbb C}[x_{0},\ldots,x_{2m+1}].
	 \]
	   Then 
	 \[
	 F_{x_{0}}=G+x_{1}x_{m+1}+\cdots 
	 +x_{m}x_{2m}+x_{2m+1}
	 \]
	  and
	 \[
	 {\rm Diff}(F_{x_{0}})=\langle F_{x_{0}}, G_{1}+x_{m+1},\ldots ,
	 G_{m}+x_{2m},x_{1},\ldots,x_{m}, 1\rangle
	 \]
	 so ${\rm dim}{\rm Diff}(F_{x_{0}})=2m+2$.  Therefore 
	 $\Gamma(F_{x_{0}})$ is apolar to $F$ and computes the 
	 cactus rank of $F$.  Since $\{F=0\}$ is not a cone, 
	 $\Gamma(F_{x_{0}})$ is nondegenerate, so its homogeneous ideal is 
	 generated by the quadrics in the ideal of $F^{\bot}$.  
	 In particular  $\Gamma(F_{x_{0}})$ is the unique apolar 
	 subscheme of length $2m+2$.  Since this is not smoothable, 
	 the smoothable rank is strictly bigger.
	 \end{exa}

By Theorem \ref{CactusCubics} the cactus rank of a generic cubic form $F\in {\mathbb C}[x_0, \ldots , x_n]$
is at most $2n+2$. The first $n$ for which $2n+2$ is smaller than  the rank  
$r(F)=\lceil\frac{1}{n+1}{n+3 \choose 3}\rceil$  of the generic cubic form in $n+1$ variables 
is $n=8$, where  $r(F)=19$ and $ cr(F)\leq 18$.

\begin{conjecture}
The cactus rank $cr(F)$ of a general homogeneous cubic $F\in k [x_0, \ldots , x_n]$  equals the rank when $n\leq 7$ and equals $2n+2$ when $n\geq 8$. \end{conjecture}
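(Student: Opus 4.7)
The conjecture consists of matching lower bounds: $cr(F)\geq r(F)$ for generic $F$ when $n\leq 7$, and $cr(F)\geq 2n+2$ when $n\geq 8$. The upper bounds are already in hand, since Theorem \ref{CactusCubics} gives $cr(F)\leq 2n+2$ for all $n$, and the trivial inequality $cr(F)\leq r(F)$ already suffices when $n\leq 7$ (in that range $r(F)\leq 2n+2$). So the plan reduces to establishing the lower bounds in each regime.

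For the range $n\geq 8$, I would argue by a dimension count on the cactus variety $\kappa_{\ell}\subset \PP(S_{3})$, the locus of cubics of cactus rank at most $\ell$. By construction $\kappa_{\ell}$ is the image in $\PP(S_{3})$ of the incidence correspondence
\[
\{(\Gamma,[F]) : \Gamma\subset \PP(T_{1}),\ \mathrm{length}(\Gamma)=\ell,\ I_{\Gamma}\subset F^{\bot}\},
\]
and the fiber over a fixed $\Gamma$ is a linear space of dimension at most $\ell-1$. For each irreducible component $H$ of $\mathrm{Hilb}^{\ell}(\PP^{n})$ this yields $\dim \kappa_{\ell}\cap \mathrm{im}(H)\leq \dim H+\ell-1$. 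On the smoothable component the bound becomes $\dim \kappa_{\ell}^{\mathrm{sm}}\leq (n+1)\ell-1$, and the arithmetic inequality $(n+1)(2n+1)-1<\binom{n+3}{3}-1$ holds precisely when $n\geq 8$. Thus the smoothable part of the cactus variety of order $2n+1$ does not fill $\PP(S_{3})$ in this range. The main remaining task is to control the non-smoothable Gorenstein components of $\mathrm{Hilb}^{\ell}(\PP^{n})$ for $\ell\leq 2n+1$: by \cite{I,IK} these are parametrized by local algebras $T_{g}$ with $g$ a polynomial in $r\leq n$ variables, and one must verify stratum by stratum that the dimension plus the span dimension remains $<\binom{n+3}{3}-1$.

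For the range $n\leq 7$ I would proceed case by case, using the generic structure of $F^{\bot}$. The ranks $r(F)=2,4,5,8,10,12,15$ for $n=1,\ldots,7$ (with the Alexander--Hirschowitz defect at $n=4$) are small enough that apolar subschemes of length $<r(F)$ to a generic cubic fall into a short list of admissible Gorenstein Hilbert functions. For each $n$ these can be excluded by combining the constraints $F^{\bot}_{1}=0$ and $\dim F^{\bot}_{2}=\binom{n+1}{2}$ with Macaulay's bound on Gorenstein Hilbert functions. The defective case $n=4$, where the relevant secant variety nearly fills $\PP(S_{3})$, will require an extra argument tailored to the degenerate structure of the second catalecticant.

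The step I expect to be the principal obstacle, and the reason the statement is left here as a conjecture, is the control of non-smoothable Gorenstein strata in $\mathrm{Hilb}^{\ell}(\PP^{n})$ for $\ell$ close to $2n+1$ and $n\geq 8$. Such strata begin to appear precisely in this regime (cf.\ \cite{I84}), and while the smoothable dimension count is easy, ruling out that some non-smoothable stratum together with its linear spans sweeps out $\PP(S_{3})$ requires a careful stratification of $\mathrm{Hilb}^{\ell}$ by Hilbert function and a refined dimension estimate for each stratum.
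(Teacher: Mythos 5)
This statement is a \emph{conjecture} in the paper: the authors do not prove it, and your text is a proof plan rather than a proof. You have correctly identified the architecture of what a proof would have to look like, and your arithmetic is right: the upper bounds come for free from Theorem \ref{CactusCubics} and from $cr(F)\leq r(F)$, the smoothable dimension count $(n+1)(2n+1)-1<\binom{n+3}{3}-1$ does separate $n\geq 8$ from $n\leq 7$, and the genuine obstacle is exactly the one you name at the end --- controlling the non-smoothable locally Gorenstein strata of $\mathrm{Hilb}^{\ell}(\PP^{n})$ for $\ell\leq 2n+1$. But naming the obstacle is not the same as overcoming it: the stratification by Hilbert function and the ``refined dimension estimate for each stratum'' that you defer is precisely the content that is missing, and it is why the statement is a conjecture. (The authors' own unpublished attempt at $n=8$ proceeds exactly this way: stratify polynomials $f$ in at most $8$ variables by the Hilbert function of $T/f^{\bot}$ and its Iarrobino symmetric decomposition \cite{I}, bound the dimension of each stratum, and check a long list of cases. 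So your plan is the right one, but the work is all in the part you have not done.)

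Two further points. First, for $n\leq 7$ your proposed case-by-case exclusion via Macaulay bounds is not how the known cases are actually settled: the paper observes that every local Gorenstein scheme of length at most $10$ (resp.\ $11$) is smoothable \cite{CN10} (resp.\ \cite{CN11}), so for $n\leq 6$ one has $cr(F)=sr(F)$ on all relevant lengths, and then $sr(F)\geq br(F)=r(F)$ for general $F$ by Alexander--Hirschowitz; this is cleaner and completely avoids analyzing $F^{\bot}$. This route genuinely stops at $n=6$: for $n=7$ one would need smoothability up to length $14$, and non-smoothable local Gorenstein schemes of length $14$ exist \cite{I84}, so $n=7$ is open and your ``short list of admissible Hilbert functions'' would have to do real work there too. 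Second, a small technical point in your incidence-correspondence argument: to reduce ``cactus rank $\leq 2n+1$'' to schemes of length exactly $2n+1$ you should note that an apolar scheme can always be enlarged by adding reduced points while remaining apolar, and that minimal apolar schemes may be assumed locally Gorenstein \cite[proof of Proposition 2.2]{BB}, so it suffices to range over the locally Gorenstein locus of the Hilbert scheme. Neither point is fatal, but as it stands the proposal establishes nothing beyond what Theorem \ref{CactusCubics} and the trivial inequalities already give.
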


For a general cubic  form, the rank is $\leq 10$ when $n\leq 5$, while it is $12$ when $n=6$.
Now, any local Artinian Gorenstein scheme of length 
   at most $10$ is smoothable (cf. \citep{CN10}), so the conjecture holds for $n\leq 5$.  Casnati and Notari has recently extended their result to length at most $11$,  (cf. \citep{CN11}), which means that the conjecture holds also when $n=6$.  There are nonsmoothable local Gorenstein algebras of length $14$ (cf. \citep{I84}), so for  $n\geq 7$  a different argument is needed to confirm or disprove the conjecture.

\begin{ack}
The authors would like to thank 
the Institut Mittag-Leffler (Djursholm, Sweden)
for their support and hospitality, and Tony Iarrobino for helpful comments on Gorenstein algebras. {We also thank Joachim Jelisiejew for pointing out that Lemma \ref{dehom} is correct as stated only when using apolarity by contraction (as reflected in the current version of this paper, updated in May 2024) instead of  apolarity by differentiation (as it was in the originally published version).}
\end{ack}

\bibliographystyle{elsart-harv}

\end{document}